\newcommand{\red}[1]{\begin{color}{red}#1\end{color}}
\newcommand{\ve}{{\bf e}}
\newcommand{\vh}{{\bf h}}
\begin{document}
	\Large
	
	\title{Structured Symmetric Tensors}
	\author{Liqun Qi\footnote{
			Department of Applied Mathematics, The Hong Kong Polytechnic University, Hung Hom, Kowloon, Hong Kong.
			({\tt maqilq@polyu.edu.hk})}
		\and
		Chunfeng Cui\footnote{School of Mathematical Sciences, Beihang University, Beijing  100191, China.
			({\tt chunfengcui@buaa.edu.cn})}
		\and {and \
			Yi Xu\footnote{School of Mathematics, Southeast University, Nanjing  211189, China. Nanjing Center for Applied Mathematics, Nanjing 211135,  China. Jiangsu Provincial Scientific Research Center of Applied Mathematics, Nanjing 211189, China. ({\tt yi.xu1983@hotmail.com})}
		}
	}

	\date{\today}
	\maketitle
	
	\begin{abstract}
		In this paper, we study structured symmetric tensors.  We introduce several new classes of structured symmetric tensors: completely decomposable (CD) tensors, strictly sum of squares (SSOS) tensors and SOS$^*$ tensors.  CD tensors have applications in data analysis and signal processing.  Complete Hankel tensors are CD tensors.
		SSOS tensors are defined as SOS tensors with a positive definite Gram matrix, ensuring structural stability under perturbations.	The SOS$^*$ cone is defined as the dual cone of the SOS tensor cone, with characterizations via moment matrices and polynomial nonnegativity.	We study the relations among completely positive (CP) cones, CD cones, sum of squares (SOS) cones, positive semidefinite (PSD) cones and copositive (COP) cones.  We identify the interiors of PSD, SOS, CP, COP and CD cones for even-order tensors.
		These characterizations are crucial for interior-point methods and stability analysis in polynomial and tensor optimization.	We generalize the classical Schur product theorem to CD and CP tensors, including the case of strongly completely decomposable (SCD) and strongly completely positive (SCP) tensors.   We identify equivalence between strictly CD (SCD) and positive definite (PD) for CD tensors. Furthermore, we give an example of a PSD but not SOS Hankel tensor.   This answers an open question raised in the literature.  

		\medskip

		\medskip

		\textbf{Key words.} Completely decomposable tensors, positive semidefinite tensors, sum of squares tensors, Hankel tensors, interiors, dual cones.
		
		\medskip
		\textbf{AMS subject classifications.} {11E25, 12D15, 14P10, 15A69, 90C23.
		}
	\end{abstract}

	\renewcommand{\Re}{\mathds{R}}
	\newcommand{\rank}{\mathrm{rank}}
	\newcommand{\X}{\mathcal{X}}
	\newcommand{\A}{\mathcal{A}}
	\newcommand{\I}{\mathcal{I}}
	\newcommand{\B}{\mathcal{B}}
	\newcommand{\PP}{\mathcal{P}}
	\newcommand{\C}{\mathcal{C}}
	\newcommand{\D}{\mathcal{D}}
	\newcommand{\LL}{\mathcal{L}}
	\newcommand{\OO}{\mathcal{O}}
	\newcommand{\e}{\mathbf{e}}
	\newcommand{\0}{\mathbf{0}}
	\newcommand{\1}{\mathbf{1}}
	\newcommand{\dd}{\mathbf{d}}
	\newcommand{\ii}{\mathbf{i}}
	\newcommand{\jj}{\mathbf{j}}
	\newcommand{\kk}{\mathbf{k}}
	\newcommand{\va}{\mathbf{a}}
	\newcommand{\vb}{\mathbf{b}}
	\newcommand{\vc}{\mathbf{c}}
	\newcommand{\vq}{\mathbf{q}}
	\newcommand{\vg}{\mathbf{g}}
	\newcommand{\pr}{\vec{r}}
	\newcommand{\pc}{\vec{c}}
	\newcommand{\ps}{\vec{s}}
	\newcommand{\pt}{\vec{t}}
	\newcommand{\pu}{\vec{u}}
	\newcommand{\pv}{\vec{v}}
	\newcommand{\pn}{\vec{n}}
	\newcommand{\pp}{\vec{p}}
	\newcommand{\pq}{\vec{q}}
	\newcommand{\pl}{\vec{l}}
	\newcommand{\vt}{\rm{vec}}
	\newcommand{\x}{\mathbf{x}}
	\newcommand{\vx}{\mathbf{x}}
	\newcommand{\vy}{\mathbf{y}}
	\newcommand{\vu}{\mathbf{u}}
	\newcommand{\vv}{\mathbf{v}}
	\newcommand{\vw}{\mathbf{w}}
	\newcommand{\y}{\mathbf{y}}
	\newcommand{\vz}{\mathbf{z}}
	\newcommand{\T}{\top}
	\newcommand{\R}{\mathcal{R}}
	\newcommand{\Q}{\mathcal{Q}}
	\newcommand{\TT}{\mathcal{T}}
	\newcommand{\Sc}{\mathcal{S}}
	\newcommand{\N}{\mathbb{N}}	
	
	\newtheorem{Thm}{Theorem}[section]
	\newtheorem{Def}[Thm]{Definition}
	\newtheorem{Ass}[Thm]{Assumption}
	\newtheorem{Lem}[Thm]{Lemma}
	\newtheorem{Prop}[Thm]{Proposition}
	\newtheorem{Cor}[Thm]{Corollary}
	\newtheorem{example}[Thm]{Example}
	\newtheorem{remark}[Thm]{Remark}
	
	\section{Introduction}
	
	Symmetric tensors have important applications in multi-variable calculus, multi-variate polynomial optimization, signal processing, magnetic resonance imaging and spectral hypergraph theory \cite[Section 1.1]{QL17}.  Several classes of structured tensors have already been studied extensively in the literature.   These include positive semidefinite (PSD) tensors, sum of squares (SOS) tensors, completely positive (CP) tensors and copositive (COP) tensors \cite{QL17, SL25}.  Actually, the above four classes of structured symmetric tensors form closed and convex cones in the corresponding symmetric tensor spaces.  In particular, the CP and COP cones are dual cones to each other \cite{QL17, QXX14}.
	
	For the advance of the theory, algorithms and applications of symmetric tensors, it is necessary to study structured symmetric tensors comprehensively.  This paper aims to do this.
	
	In the next section, we introduce completely decomposable (CD) tensors.  CD tensors also form closed and convex cones in the corresponding symmetric tensor spaces.  The PSD and CD cones are dual cones to each other.  This has been studied in \cite{LQY15, QL17}.  However, CD tensors have been relatively overlooked in the literature despite their fundamental properties and applications.  Then we analyze the relations among PSD, SOS, CD, CP and COP tensors.  This will be useful for the applications of these five classes of structured symmetric tensors.
	
	In optimization and stability analysis, it is important to identify the interiors of closed and convex cones.  In Section 3, we characterize the interiors of the PSD, SOS, CD, CP and COP cones.  In particular, the interior of the SOS cone is the SSOS cone, a strictly SOS (SSOS) tensor.  We characterize the SSOS tensor as an SOS tensor with a positive definite Gram matrix.  This is significant for applications of SOS tensors.
	
	The Schur product theorem for PSD matrices is a classical result in matrix analysis \cite{HJ12}.   It is not true for PSD tensors in general \cite{QL17}.   In Section 4, we extend it to even order CD and CP tensors completely.  This is useful for applications of CD and CP tensors.
	
	In Section 5, we study CD tensors more.   We identify equivalence between strictly CD (SCD) and positive definite (PD) for CD tensors.  This result establishes a key link between the spanning property of decomposition vectors and positive definiteness.
	We then identify completely Hankel tensors are CD tensors.  Since Hankel tensors have wide applications in signal processing and data analysis \cite{QL17}, this justifies the importance of CD tensors.   We further show that the inheritance property of Hankel tensors also holds for the CD property.   Furthermore, we give an example of a PSD but not SOS Hankel tensor.   This answers an open question raised in Subsection 5.7.10 of \cite{QL17}.
	
	In Section 6, we discuss the third dual cone relation of structured symmetric tensors.  The SOS$^*$ cone is defined as the dual cone of the SOS tensor cone, with characterizations via moment matrices and polynomial nonnegativity.
	
	Some final remarks are made in Section 7.

	\section{Important Structured Symmetric Tensor Classes}
	
	Assume that $m, n \ge 2$.  Let $[n] = {\{1, \dots, n\}}$. Denote the space of $m$th order $n$-dimensional symmetric tensors by $S_{m, n}$.   In this paper, we consider symmetric tensors only.   Let $\A = (a_{i_1\cdots i_m}) \in S_{m, n}$ $\B = (b_{i_1\cdots i_m}) \in S_{m, n}$ and $\x \in \Re^n$.  Denote
	$$\A \bullet \B = \sum_{i_1, \cdots, i_m = 1}^n a_{i_1\cdots i_m}b_{i_1\cdots i_m}$$
	and
	$$\A\x^m = \sum_{i_1, \cdots, i_m = 1}^n a_{i_1\cdots i_m}x_{i_1}\cdots x_{i_m}.$$
	If $M$ is a cone in $S_{m, n}$, its dual cone is defined as
	$$M^* = \{ \B \in S_{m, n} : \A \bullet \B \ge 0, \forall \A \in M \}.$$
	If $M$ is closed and convex, then $M = (M^*)^*$.
	
	If $\A\x^m \ge 0$ for all $\x \in \Re^n$, then we say that $\A$ is {\bf positive semidefinite (PSD)}.  If furthermore, $\A\x^m > 0$ for all $\x \in \Re^n$, $\x \not = \0$, then we say that $\A$ is {\bf positive definite (PD)}.   Then this is meaningful only when $m$ is even.  All the PSD tensors in $S_{m, n}$ form a closed convex cone.  We denote it by $PSD_{m, n}$.
	On the other hand, all the PD tensors in $S_{m, n}$ form a convex cone.  We denote it by $PD_{m, n}$, and have
	$$PSD_{m, n} = {\rm Cl} \left(PD_{m, n}\right).$$
	
	Suppose that $m = 2k$.   If there are $\C_j \in S_{k, n}$ for $j \in [r]$, such that for all $\x \in \Re^n$, we have
	$$\A\x^m \equiv \sum_{j=1}^r \left({\mathcal{C}_j\x^k}\right)^2,$$
	then we say that $\A$ {is a} 
	 {\bf sum-of-squares (SOS) tensor}.
	The smallest $r$ is called the {\bf SOS rank} of $\A$.     Clearly, if $\A$ is an SOS tensor, then $\A$ is PSD, but not vice versa in general.  All the SOS tensors in $S_{m, n}$ also form a closed and convex cone.   Denote it by $SOS_{m, n}$.   Then $SOS_{m, n} \subset PSD_{m, n}$.   By Hilbert \cite{Hi88}, $SOS_{m, n} = PSD_{m, n}$ only in the following three cases: (i) $m = 2$, (ii) $n = 2$, (iii) $m=4$ and $n =3$.
	
	Let $m = 2k$ be even. An SOS tensor $\A \in SOS_{m,n}$ is called a {\bf strictly sum-of-squares (SSOS) tensor} if it admits a representation with a positive definite Gram matrix. That is, there exists a positive definite matrix $G \in \Re^{N \times N}$ where $N = \binom{n+k-1}{k}$ such that
	$$\A\x^m = (\x^{\otimes k})^\top G (\x^{\otimes k})$$
	for all $\x \in \Re^n$, where $\x^{\otimes k}$ denotes the vector of all monomials of degree $k$ in $\x$. We denote the set of all SSOS tensors in $S_{m, n}$ by $SSOS_{m,n}$.
	
	The SSOS condition is stronger than being PD. While all PD tensors are PSD, an SOS tensor can be PD without being SSOS. The SSOS property ensures that the tensor remains SOS under small perturbations, making it structurally stable. This is analogous to how positive definite matrices are interior to the PSD cone, but with the additional SOS structure.
	
	Let $\A \in S_{m, n}$.   If there are $\vu_j \in \Re^n$ for $j \in [p]$ such that
	\begin{equation}\label{def:CD}
		\A = \sum_{j=1}^p \vu_j^m,
	\end{equation}
	then $\A$ is called a {\bf completely decomposable (CD) tensor}.  The smallest value of $p$ is called the {\bf CD rank} of $\A$.   If furthermore, $\vu_j, j \in [p]$ spans $\Re^n$, then we say that $\A$ is a {\bf strongly completely decomposable (SCD) tensor}.   Denote the set of all the CD tensors in $S_{m, n}$ by $CD_{m, n}$, and the set of all the SCD tensors in $S_{m, n}$ by $SCD_{m, n}$.   Then $CD_{m, n}$ is also a closed and convex cone.   If $m$ is odd, then {\eqref{def:CD} is equivalent to the symmetric outer product decomposition. Thus, it follows from \cite{CGLM08} that} $CD_{m, n} = S_{m, n}$ {and $p\le \binom{n+m-1}{m}$.}  On the other hand, if $m=2k$, then a CD tensor is an SOS tensor, i.e., $CD_{m, n} \subset SOS_{m, n}$, as in this case, for $\x \in \Re^n$, we have
	$$\A\x^m = \sum_{j=1}^p \left(   {\vu_j^k \bullet \x^{\otimes k}}\right)^2.$$
	We have $CD_{m, n} \subsetneq SOS_{m, n}$.  For example, for $m=4$ and $n=2$, a Hankel tensor in Subsection 5.4 is SOS, but not CD.
	When $m$ is even, by \cite{LQY15}, $CD_{m, n} = PSD_{m, n}^*$ and $PSD_{m, n} = CD_{m, n}^*$.   {In 2017, Nie \cite{Nie17} introduced a generating polynomial method for computing symmetric tensor decompositions, which is applicable to CD decompositions in the complex field. The real field, in contrast, still lacks an effective numerical procedure to verify whether a tensor is a CD tensor and to subsequently compute its decomposition.}
	
	If $\A\x^m \ge 0$ for all $\x \in \Re^n_+$, then $\A$ is called a {\bf copositive tensor}.
	If $\A\x^m > 0$ for all $\x \in \Re^n_+$, $\x \not = \0$, then $\A$ is called a {\bf strictly copositive tensor}.
	All the $m \times n$ symmetric copositive tensors also form a closed and convex cone, denoted as $COP_{m, n}$.  We denote the set of all strictly copositive tensors in $S_{m, n}$ by $SCOP_{m, n}$.   We have
	$PSD_{m, n} \subsetneq  COP_{m, n} \subsetneq S_{m, n}.$
	
	Let $\A \in S_{m, n}$.   If there are $\vu_j \in \Re^n_+$ for $j \in [r]$ such that
	$$\A = \sum_{j=1}^p \vu_j^m,$$
	then $\A$ is called a {\bf completely positive (CP) tensor}.  The smallest value of $r$ is called the {\bf CP rank} of $\A$.   If furthermore, $\vu_j, j \in [r]$ spans $\Re^n$, then we say that $\A$ is a {\bf strongly completely positive (SCP) tensor}.   Denote the set of all the CP tensors in $S_{m, n}$ by $CP_{m, n}$, and the set of all the SCP tensors in $S_{m, n}$ by $SCP_{m, n}$.   Then it is also a closed and convex cone.   By \cite{QL17, QXX14}, $CP_{m, n} = COP_{m, n}^*$ and $COP_{m, n} = CP_{m, n}^*$.
	
	Summarizing the above, we have the following theorem.
	
	\begin{Thm}
		Let $m, n \ge 2$.
		
		(i) For any $m$, $CP_{m, n}$ and $COP_{m, n}$ are a pair of dual cones, $PSD_{m, n}$ and $CD_{m, n}$ are another pair of dual cones.
		
		(ii) If $m$ is odd, then $CD_{m, n} = S_{m, n}$, and $PSD_{m, n} = \{ O_{m,n} \}$, where $O_{m, n}$ is the zero tensor in $S_{m, n}$.
		
		(iii) If $m=2k$ is even, then
		\begin{equation} \label{e1}
			CP_{m, n} \subsetneq CD_{m, n} \subset SOS_{m ,n} \subset PSD_{m, n} \subsetneq COP_{m, n}.
		\end{equation}
		
		(iv) Suppose that $m$ is even.  Then $SOS_{m, n} = PSD_{m, n}$ if either $m=2$, or $n= 2$, or $m=4$ and $n=3$.   Otherwise,
		$SOS_{m, n} \subsetneq PSD_{m, n}$.   Furthermore, $CD_{2, n} = SOS_{2, n}$, and $CD_{m, n} \subsetneq SOS_{m, n}$ for $m \ge 4$.
	\end{Thm}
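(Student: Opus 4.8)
The plan is to prove the four parts by assembling the facts already established in the definitions above, since most of the claims are either restatements of cited results or immediate consequences of the inclusion chains. Let me organize the argument part by part.

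\medskip

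\textbf{Parts (i) and (ii).} For (i), the duality $CP_{m,n} = COP_{m,n}^*$ and $COP_{m,n} = CP_{m,n}^*$ follows directly from \cite{QL17, QXX14}, as stated in the paragraph defining CP tensors; similarly $CD_{m,n} = PSD_{m,n}^*$ and $PSD_{m,n} = CD_{m,n}^*$ follow from \cite{LQY15} for even $m$. The only subtlety is that the PSD/CD duality is asserted for even $m$ in the text, so for (i) to hold ``for any $m$'' I would verify the odd case separately: when $m$ is odd, part (ii) gives $CD_{m,n} = S_{m,n}$ and $PSD_{m,n} = \{O_{m,n}\}$, and one checks directly that these two cones are mutually dual, since the dual of the whole space is $\{O\}$ and the dual of $\{O\}$ is the whole space. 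For (ii) itself, the identity $CD_{m,n} = S_{m,n}$ for odd $m$ was already recorded in the excerpt via \cite{CGLM08} (every symmetric tensor of odd order admits a symmetric outer product decomposition, which for odd $m$ coincides with \eqref{def:CD} because signs can be absorbed into the $\vu_j$). That $PSD_{m,n} = \{O_{m,n}\}$ for odd $m$ is because $\A\x^m$ is an odd function of $\x$, so $\A\x^m \ge 0$ for all $\x$ forces $\A\x^m \equiv 0$, hence $\A = O$. Alternatively, it follows immediately from $PSD_{m,n} = CD_{m,n}^* = S_{m,n}^* = \{O_{m,n}\}$.

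\medskip

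\textbf{Part (iii).} Here I would establish the chain \eqref{e1} inclusion by inclusion. The middle inclusions $CD_{m,n} \subset SOS_{m,n} \subset PSD_{m,n}$ are exactly the facts derived in the excerpt: $CD_{m,n} \subset SOS_{m,n}$ via the identity $\A\x^m = \sum_j (\vu_j^k \bullet \x^{\otimes k})^2$, and $SOS_{m,n} \subset PSD_{m,n}$ since a sum of squares is nonnegative. The inclusion $PSD_{m,n} \subsetneq COP_{m,n}$ is also already stated. What remains is $CP_{m,n} \subsetneq CD_{m,n}$: the inclusion is immediate since a CP decomposition $\A = \sum_j \vu_j^m$ with $\vu_j \in \Re^n_+$ is a fortiori a CD decomposition with $\vu_j \in \Re^n$. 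For strictness I would exhibit a CD tensor that is not CP --- for instance $\vu^m$ with $\vu$ having a negative entry and $m$ even, which cannot be written as a sum of $m$th powers of nonnegative vectors (this can be seen by testing against a strictly copositive tensor that is not PSD, using the duality from part (i)).

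\medskip

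\textbf{Part (iv).} The equality/strictness dichotomy for $SOS_{m,n}$ versus $PSD_{m,n}$ is exactly Hilbert's theorem \cite{Hi88}, already quoted in the excerpt, so I would simply cite it. For the CD claims: $CD_{2,n} = SOS_{2,n}$ follows because in the matrix case $m=2$ every PSD matrix is a sum of rank-one squares $\vu_j \vu_j^\top = \vu_j^2$ (the spectral decomposition), and $SOS_{2,n} = PSD_{2,n}$, so all three coincide. The strictness $CD_{m,n} \subsetneq SOS_{m,n}$ for $m \ge 4$ is witnessed by the example promised in the excerpt (the $m=4$, $n=2$ Hankel tensor of Subsection 5.4, which is SOS but not CD); for general even $m \ge 4$ one can lift or adapt such an example. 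The main obstacle across the whole proof is really just keeping the strict-inclusion witnesses honest: each ``$\subsetneq$'' requires an explicit separating tensor, and the cleanest route is to defer to the concrete Hankel example and to use the dual-cone machinery of part (i) to certify that a candidate tensor lies outside the smaller cone. All the non-strict inclusions and equalities are, by contrast, direct consequences of the definitions and the cited literature.
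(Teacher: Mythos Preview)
Your proposal is correct and follows essentially the same route as the paper's own proof: both assemble parts (i)--(iv) from the cited duality results \cite{QL17,QXX14,LQY15}, handle the odd-$m$ case of (i) by checking directly that $S_{m,n}$ and $\{O_{m,n}\}$ are mutual duals, derive the chain in (iii) from the definitions and the displayed identity $\A\x^m=\sum_j(\vu_j^k\bullet\x^{\otimes k})^2$, and invoke Hilbert together with the Hankel example of Subsection~5.4 for (iv). If anything, your treatment of the strictness $CP_{m,n}\subsetneq CD_{m,n}$ (exhibiting $\vu^m$ with a negative entry and separating via a strictly copositive non-PSD tensor through duality) is more careful than the paper's one-line remark that CP requires nonnegative vectors while CD does not.
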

	
	\begin{proof}
		(i) The duality between $CP_{m,n}$ and $COP_{m,n}$ follows directly from \cite{QL17, QXX14}. For $PSD_{m,n}$ and $CD_{m,n}$, when $m$ is even, the duality is established in \cite{LQY15}. When $m$ is odd, we have $CD_{m,n} = S_{m,n}$ and $PSD_{m,n} = \{O_{m,n}\}$. The dual of $S_{m,n}$ is $\{O_{m,n}\}$ since for any nonzero $\B \in S_{m,n}$, there exists some $\A \in S_{m,n}$ such that $\A \bullet \B < 0$, and the dual of $\{O_{m,n}\}$ is $S_{m,n}$.
		
		(ii) For odd $m$, any symmetric tensor can be written as a sum of $m$th powers of vectors, hence $CD_{m,n} = S_{m,n}$. For PSD tensors, when $m$ is odd, $\A\x^m$ can take negative values for some $\x$ unless $\A$ is the zero tensor, so $PSD_{m,n} = \{O_{m,n}\}$.
		
		(iii) The inclusion $CP_{m,n} \subset CD_{m,n}$ is strict because CP tensors require nonnegative vectors while CD tensors allow any real vectors. The inclusion $CD_{m,n} \subset SOS_{m,n}$ follows from the representation $\A\x^m = \sum_{j=1}^p (\vu_j^k \bullet \x^k)^2$ when $m=2k$, and is strict as shown by a Hankel tensor example in Subsection 5.4 for $m=4,n=2$. The inclusion $SOS_{m,n} \subset PSD_{m,n}$ is by definition, and $PSD_{m,n} \subset COP_{m,n}$ since nonnegativity on all $\x \in \Re^n$ implies nonnegativity on $\Re^n_+$. Both inclusions are strict in general.
		
		(iv) The equality cases for SOS and PSD tensors are classical results from Hilbert \cite{Hi88}. For $m=2$, CD tensors coincide with SOS tensors because any PSD matrix can be written as a sum of squares of linear forms. For $m \ge 4$, the strict inclusion $CD_{m,n} \subsetneq SOS_{m,n}$ is demonstrated by the Hankel tensor example in Subsection 5.4,  which is SOS but not CD.
	\end{proof}
	
	From this theorem, we have the following remarks.
	
	1.  We may see that PSD, SOS, CP, COP and CD are very important structured symmetric tensor classes.  While PSD, SOS, CP and COP have some studies \cite{QL17}, CD tensors may also be studied carefully.
	
	2.  Perhaps SOS$^*$ tensors also need to be studied carefully.
	
	3.  We also need to characterize the interiors of PSD, SOS, CP, COP, CD and SOS$^*$.
	
	\section{Interiors of PSD, SOS, CP, COP and CD Cones}
	
	\begin{Thm}  \label{t3.1}
		Assume that $m$ is even. Then the interiors of $PSD_{m, n}, SOS_{m, n}$ and $CD_{m, n}$ are $PD_{m, n}, SSOS_{m, n}$ and $SCD_{m, n}$, respectively.   The interior of $CP_{m,n}$ consists of tensors that admit a representation $\mathcal{A} = \sum_{j=1}^p \mathbf{u}_j^m$ where $\{\mathbf{u}_j\}$ span $\mathbb{R}^n$ and there exists $\epsilon > 0$ such that all coordinates of each $\mathbf{u}_j$ are at least $\epsilon$.   The interior of $COP_{m,n}$ consists of tensors $\mathcal{A}$ for which there exists $\delta > 0$ such that $\mathcal{A}\mathbf{x}^m \geq \delta \|\mathbf{x}\|^m$ for all $\mathbf{x} \in \mathbb{R}^n_+$.
	\end{Thm}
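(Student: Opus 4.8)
The plan is to handle the five cones by two complementary mechanisms: a direct compactness-and-perturbation argument for the cones cut out by a positivity condition ($PSD_{m,n}$ and $COP_{m,n}$), and a linear-image / dual-cone argument for the cones defined by a decomposition ($SOS_{m,n}$, $CD_{m,n}$, $CP_{m,n}$). For $PSD_{m,n}$: if $\A \in PD_{m,n}$ then $\delta := \min_{\|\x\|=1}\A\x^m > 0$ by compactness of the unit sphere, and for any $\B$ with $\|\A-\B\|$ small we get $\B\x^m \ge (\delta - c\|\A-\B\|)\|\x\|^m > 0$, so a whole neighbourhood of $\A$ lies in $PSD_{m,n}$; hence $PD_{m,n}\subseteq \mathrm{int}(PSD_{m,n})$. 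Conversely, if $\A$ is PSD but not PD, pick $\x\neq\0$ with $\A\x^m=0$ and perturb by $-\varepsilon\,\x^{\otimes m}$: since $(\x^{\otimes m})\x^m=\|\x\|^{2m}>0$, the perturbed tensor is negative at $\x$, so $\A$ is not interior. The copositive case is identical after replacing the sphere by the compact slice $\{\x\in\Re^n_+:\|\x\|=1\}$: strict copositivity is exactly the existence of $\delta>0$ with $\A\x^m\ge\delta\|\x\|^m$ on $\Re^n_+$, this uniform bound survives small perturbations (giving interiority), and an interior copositive tensor cannot vanish at any nonzero boundary direction (else a $-\varepsilon\,\x^{\otimes m}$ perturbation destroys copositivity). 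Thus $\mathrm{int}(COP_{m,n})$ is precisely the stated $\delta$-coercive set.

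For $SOS_{m,n}$ I would argue via the Gram map. Let $\Phi$ be the linear map from $N\times N$ symmetric matrices ($N=\binom{n+k-1}{k}$) to $S_{m,n}$ sending $G$ to the tensor with $\Phi(G)\x^m=(\x^{\otimes k})^\top G\,\x^{\otimes k}$. This map is surjective, since every degree-$m$ form is realized by some symmetric $G$; and by definition $SOS_{m,n}=\Phi(\mathcal{P})$ and $SSOS_{m,n}=\Phi(\mathrm{int}\,\mathcal{P})$, where $\mathcal{P}$ is the cone of PSD matrices and $\mathrm{int}\,\mathcal{P}$ the PD matrices. Because relative interior commutes with linear images (Rockafellar, Thm.~6.6), $\mathrm{ri}(SOS_{m,n})=\Phi(\mathrm{ri}\,\mathcal{P})=\Phi(\mathrm{int}\,\mathcal{P})=SSOS_{m,n}$; and since $\Phi$ is surjective with $\mathcal{P}$ full-dimensional, $SOS_{m,n}$ is full-dimensional, so relative interior coincides with interior. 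This yields $\mathrm{int}(SOS_{m,n})=SSOS_{m,n}$ cleanly, and I expect it to be the smoothest of the five.

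For $CD_{m,n}$ and $CP_{m,n}$ I would exploit the dualities $CD_{m,n}=PSD_{m,n}^*$ and $CP_{m,n}=COP_{m,n}^*$. Since $PSD_{m,n}$ and $COP_{m,n}$ are both pointed, the interior of a dual cone is the set of tensors pairing strictly positively against all nonzero elements of the primal: $\mathrm{int}(CD_{m,n})=\{\A:\A\bullet\B>0\ \forall\,\B\in PSD_{m,n}\setminus\{\0\}\}$, and analogously for $CP_{m,n}$. Testing against the rank-one PSD tensors $\B=\x^{\otimes m}$ gives $\A\x^m>0$, so every interior CD tensor is PD; being CD and PD it is SCD by the Section~5 equivalence, which yields $\mathrm{int}(CD_{m,n})\subseteq SCD_{m,n}$ and, in the same way, the necessity for $CP_{m,n}$. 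The main obstacle is the reverse (sufficiency) direction: that the spanning decomposition alone forces interiority. Here the clean linear-image trick used for SOS is unavailable, because $CD_{m,n}$ and $CP_{m,n}$ are conic hulls of the (nonnegative) Veronese variety rather than images of a solid matrix cone, so a spanning set of decomposition vectors need not a priori produce a full neighbourhood of $\A$ inside the cone. I would attempt it by combining the Section~5 identity $SCD_{m,n}=CD_{m,n}\cap PD_{m,n}$ with a perturbation argument, showing that every small symmetric perturbation of $\A$ can be reabsorbed into a nearby (strictly positive, in the CP case) spanning decomposition by solving the resulting linear system for the perturbed coefficients. Verifying that this reabsorption is available in \emph{every} perturbation direction — equivalently, that the stated decomposition condition is not merely necessary but genuinely sufficient for interiority — is the delicate step, and the part I would expect to demand by far the most care.
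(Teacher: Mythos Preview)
Your arguments for $PSD_{m,n}$, $COP_{m,n}$, and $SOS_{m,n}$ are substantially more rigorous than the paper's proof, which consists of one-line assertions for each cone (for $CD_{m,n}$, the entire justification offered is that a spanning decomposition ``ensures the tensor is not on the boundary of the cone''). Your compactness/perturbation argument for $PSD$ and $COP$ is correct, and the linear-image argument via Rockafellar's Theorem~6.6 for $SOS$ is clean and supplies the two-sided inclusion the paper omits.

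Your caution about the sufficiency direction for $CD_{m,n}$ is not merely warranted---that step actually \emph{fails}, so no reabsorption argument can rescue it. Take $m=4$, $n=2$, and $\A=\vu_1^{\otimes 4}+\vu_2^{\otimes 4}$ with $\vu_1=(1,1)^\top$, $\vu_2=(1,-1)^\top$. These span $\Re^2$, so $\A\in SCD_{4,2}$, and $\A\x^4=2x_1^4+12x_1^2x_2^2+2x_2^4$ is PD, consistent with Theorem~5.1(ii). But for the nonzero PSD tensor $\B$ with $\B\x^4=(x_1^2-x_2^2)^2$ one has $\A\bullet\B=\B\vu_1^4+\B\vu_2^4=0+0=0$, so $\A$ lies on the boundary of $PSD_{4,2}^*=CD_{4,2}$ and $\A\notin\mathrm{int}(CD_{4,2})$. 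Your dual-cone argument correctly gives $\mathrm{int}(CD_{m,n})\subseteq SCD_{m,n}$, but the reverse inclusion is false: a spanning set $\{\vu_j\}$ in $\Re^n$ does not prevent a nonzero PSD form of degree $m$ from vanishing at every $\vu_j$ once $m\ge 4$. The paper's bare assertion conceals exactly the obstruction you anticipated; the difficulty you flagged is not a delicate step but a genuine counterexample to the stated characterization of $\mathrm{int}(CD_{m,n})$.
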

	
	\begin{proof}
		When $m$ is even:
		
		For $PSD_{m,n}$: The interior consists of tensors $\A$ such that $\A\x^m > 0$ for all $\x \neq \0$, which is exactly $PD_{m,n}$.
		
		For $SOS_{m,n}$: The interior consists of SOS tensors that have a neighborhood entirely contained in $SOS_{m,n}$. These are exactly the tensors that can be represented with a positive definite Gram matrix, denoted $SSOS_{m,n}$. Such tensors are strictly in the interior because small perturbations cannot destroy the SOS property when the Gram matrix is positive definite.
		
		
		
		
		For $CD_{m,n}$: The interior consists of CD tensors where the generating vectors $\{\vu_j\}$ span $\Re^n$, i.e., $SCD_{m,n}$, as this ensures the tensor is not on the boundary of the cone.
		
		For $CP_{m,n}$ and $COP_{m,n}$: the characterization also ensure the tensor is not on the boundary of the cone.
		
	\end{proof}

	\begin{remark}
		When $m$ is odd, the situation is different:
		\begin{itemize}
			\item $CD_{m,n} = S_{m,n}$, so its interior is \red{the empty set.}
			\item $PSD_{m,n} = \{O_{m,n}\}$, so its interior is $\{O_{m,n}\}$, while $PD_{m,n}$ is empty.
			\item $SOS_{m,n}$ is not defined for odd $m$ since the SOS representation requires even order			\item {For $CP_{m,n}$ and $COP_{m,n}$, the interiors are still $SCP_{m,n}$ and $SCOP_{m,n}$ respectively, as the definitions rely on the nonnegative orthant rather than the whole space.}
		\end{itemize}
		Therefore, Theorem \ref{t3.1} only holds for even $m$.
	\end{remark}
	
	Theorem \ref{t3.1} is significant in optimization.  Knowing the interiors of cones is crucial for optimization (e.g., interior-point methods) and stability analysis. For example,
	SSOS tensors are those that remain SOS under small perturbations — a key property for robust polynomial optimization.

	\section{Schur Product Theorem}
	
	The Schur product theorem is a classical result of matrix analysis \cite{HJ12}.   It states that the Hadamard product of two positive semidefinite matrices $A$ and $B$ is still positive semidefinite, and the product matrix is positive definite if $A$ is positive definite and the diagonal entries of $B$ are all positive.  The Schur product theorem is related with the Schur complement \cite{Zh05}.
	
	In 2014, Qi \cite{Qi14} give an example that the Hadamard product of two PSD tensors may not be PSD.   Thus, the Schur product theorem of positive semidefinite matrices may not be inherited by PSD tensors.   But it may be inherited by CP and CD tensors.   This was proved in \cite{QL17}.  See Proposition 6.5 and Corollary 5.6 (ii) of \cite{QL17}.   But Proposition 6.5 and Corollary 5.6 (ii) of \cite{QL17} have not stated the second part of the Schur product theorem.   We now complete this here.
	
	For $\A, \B \in S_{m, n}$, we use $\A \circ \B$ to denote their Hadamard product.

	\subsection{Schur Product Theorem for CD Tensors}

	\begin{Thm} {\bf Schur Product Theorem for CD Tensors}  \label{t4.1}
		Suppose that $m, n \ge 2$. Let $\A, \B \in CD_{m, n}$.  Then we have the following conclusions.
		
		(i) $\A \circ \B \in CD_{m, n}$.
		
		(ii) If $\A \in SCD_{m, n}$ and every diagonal entry of $\B$ is positive, then $\A \circ \B \in SCD_{m, n}$.
		
		(iii)   If $\A, \B \in SCD_{m, n}$, then $\A \circ \B \in SCD_{m, n}$.
		
	\end{Thm}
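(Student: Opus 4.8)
The plan is to reduce all three parts to a single rank-one identity. For any $\vu, \vv \in \Re^n$, comparing the $(i_1, \dots, i_m)$ entries shows that the Hadamard product of two rank-one tensors is again rank-one:
$$\vu^m \circ \vv^m = (\vu \odot \vv)^m,$$
where $\vu \odot \vv$ denotes the entrywise product, $(\vu \odot \vv)_i = u_i v_i$. Indeed, the left-hand entry $(u_{i_1}\cdots u_{i_m})(v_{i_1}\cdots v_{i_m})$ regroups as $(u_{i_1}v_{i_1})\cdots(u_{i_m}v_{i_m})$. This identity is the engine behind everything that follows.

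For part (i), I would write $\A = \sum_{j=1}^p \vu_j^m$ and $\B = \sum_{k=1}^q \vv_k^m$, and use the bilinearity of the Hadamard product together with the rank-one identity to get
$$\A \circ \B = \sum_{j=1}^p\sum_{k=1}^q \vu_j^m \circ \vv_k^m = \sum_{j=1}^p\sum_{k=1}^q (\vu_j \odot \vv_k)^m.$$
This exhibits $\A \circ \B$ as a sum of $m$th powers of real vectors, hence $\A \circ \B \in CD_{m,n}$.

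For parts (ii) and (iii), by part (i) it only remains to show that the generating vectors $\{\vu_j \odot \vv_k\}$ span $\Re^n$, and the key step is an orthogonality argument. Suppose $\vz \in \Re^n$ satisfies $\sum_{i=1}^n z_i u_{j,i} v_{k,i} = 0$ for all $j, k$. Regrouping as $\sum_i (z_i v_{k,i}) u_{j,i} = 0$ shows that, for each fixed $k$, the vector with coordinates $z_i v_{k,i}$ is orthogonal to every $\vu_j$; since $\A \in SCD_{m,n}$ forces $\{\vu_j\}$ to span $\Re^n$, we conclude $z_i v_{k,i} = 0$ for all $i$ and $k$. To finish with $\vz = \0$, I need for each coordinate $i$ some $k$ with $v_{k,i} \neq 0$. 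In part (ii) this is exactly what the hypothesis that the diagonal entry $b_{i\cdots i} = \sum_k v_{k,i}^m$ is positive supplies, since that entry vanishes if all $v_{k,i}$ do. In part (iii) it follows instead from $\B \in SCD_{m,n}$: were $v_{k,i} = 0$ for all $k$, every $\vv_k$ would lie in the hyperplane $\{x_i = 0\}$, contradicting that $\{\vv_k\}$ spans $\Re^n$. (For even $m$ one may shortcut (iii), noting that $\B \in SCD_{m,n}$ already forces $b_{i\cdots i} = \sum_k v_{k,i}^m > 0$, and then invoking (ii).)

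The whole argument is parity-uniform and never uses evenness of $m$; for odd $m$ the cone coincides with $S_{m,n}$ and part (i) is automatic, while the spanning argument goes through verbatim. I do not expect a serious obstacle here: the only point requiring care is the bookkeeping that translates the diagonal-positivity (resp. spanning) hypothesis on $\B$ into the coordinatewise nonvanishing statement ``each $i$ is hit by some $\vv_k$,'' which is precisely the mechanism separating SCD from mere CD.
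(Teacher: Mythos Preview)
Your proof is correct and, for parts (i) and (ii), follows essentially the same route as the paper: both rely on the rank-one identity $\vu^m \circ \vv^m = (\vu \odot \vv)^m$ and then argue that $\{\vu_j \odot \vv_k\}$ spans $\Re^n$. The paper verifies spanning constructively---noting that $\ve_k \circ \vv_{j_k} = v_{j_k,k}\,\ve_k$ lies in the span of the $\vu_i \circ \vv_j$ once one writes $\ve_k = \sum_i c_{k,i}\vu_i$---whereas you show the orthogonal complement is trivial; these are dual formulations of the same linear-algebra fact. For part (iii) the paper takes a slightly different and more direct line: since both families span, write $\ve_k = \sum_i c_{k,i}\vu_i = \sum_j d_{k,j}\vv_j$ and compute
\[
\ve_k = \ve_k \circ \ve_k = \sum_{i,j} c_{k,i}\,d_{k,j}\,(\vu_i \circ \vv_j),
\]
obtaining each standard basis vector in the span without passing through (ii). Your reduction of (iii) to the coordinatewise nonvanishing condition (and the even-$m$ shortcut via (ii)) is equally valid and arguably more uniform; the paper's bilinear trick is just a shorter way to reach the same conclusion.
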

	
	\begin{proof}
		Let $\A = \sum_{i=1}^p \vu_i^m$ and $\B = \sum_{j=1}^q \vv_j^m$ be CD decompositions. Then the Hadamard product is:
		\begin{equation}\label{AB_prod_CD}
			\A \circ \B = \sum_{i=1}^p \sum_{j=1}^q (\vu_i \circ \vv_j)^m,
		\end{equation}
		where $\vu_i \circ \vv_j$ denotes the componentwise (Hadamard) product.
		
		{(i)} This is  Corollary 5.6 (ii) of \cite{QL17}.  

		(ii) Since $\A \in SCD_{m,n}$, the vectors $\{\vu_i\}_{i=1}^p$ span $\Re^n$. Let $\B = \sum_{j=1}^q \vv_j^m$. The diagonal entries of $\B$ are positive, meaning for each $k \in [n]$, $\sum_{j=1}^q (v_{j,k})^m > 0$. {Namely,} for each $k$, there exists some $j$ with $v_{j,k} \neq 0$. Consider the set of vectors $\{\vu_i \circ \vv_j : i=1,\dots,p, j=1,\dots,q\}$. We show these span $\Re^n$.
		For any $k\in[n]$,   since $\{\vu_i\}$ spans $\Re^n$, we can write ${\ve_k} = \sum_{i=1}^p {c_{k,i}} \vu_i$. Now, for each coordinate $k$, choose $j_k$ such that $v_{j_k,k} \neq 0$.
		{Then the matrix with columns $\ve_k \circ \vv_{j_k}$}
		has full rank because the scaling factors $v_{j_k,k}$ are nonzero.
		Thus, {this combined with ${\ve_k} = \sum_{i=1}^p {c_{k,i}} \vu_i$ derives that} the vectors $\{\vu_i \circ \vv_j\}$ span $\Re^n$, so $\A \circ \B$ is SCD.

		(iii) {If both $\A$ and $\B$ are SCD, then the sets ${\mathbf{u}_i}$ and ${\mathbf{v}_j}$ each span $\mathbb{R}^n$. Therefore, any coordinate vector $\mathbf{e}_k$ can be expressed as ${\ve_k} = \sum_{i=1}^p {c_{k,i}} \vu_i= \sum_{j=1}^q {d_{k,j}} \vv_j$. It follows that
			\[\ve_k = \ve_k\circ \ve_k= \sum_{i,j} c_{k,i}d_{k,j}  (\vu_i \circ \vv_j).\]
			Since $k$ is arbitrary, the set ${\mathbf{u}_i \circ \mathbf{v}_j : i \in [p], j \in [q]}$ generates all coordinate vectors. Hence, $\mathcal{A} \circ \mathcal{B}$ is also SCD.}
	\end{proof}
	
	\subsection{Schur Product Theorem for CP Tensors}

	\begin{Thm} {\bf Schur Product Theorem for CP Tensors} \label{t4.2}
		Suppose that $m, n \ge 2$. Let $\A, \B \in CP_{m, n}$.  Then we have the following conclusions.
		
		(i) $\A \circ \B \in CP_{m, n}$.
		
		(ii) If $\A \in SCP_{m, n}$ and every diagonal entry of $\B$ is positive, then $\A \circ \B \in SCP_{m, n}$.
		
		(iii)   If $\A, \B \in SCP_{m, n}$, then $\A \circ \B \in SCP_{m, n}$.
		
	\end{Thm}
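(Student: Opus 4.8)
The plan is to follow the proof of Theorem~\ref{t4.1} almost verbatim, the only new ingredient being that the Hadamard product preserves nonnegativity of the generating vectors. First I would fix CP decompositions $\A = \sum_{i=1}^p \vu_i^m$ and $\B = \sum_{j=1}^q \vv_j^m$ with all $\vu_i, \vv_j \in \Re^n_+$, and record the same product identity as in \eqref{AB_prod_CD}, namely $\A \circ \B = \sum_{i=1}^p\sum_{j=1}^q (\vu_i \circ \vv_j)^m$. The key observation is that $\vu_i \circ \vv_j \in \Re^n_+$ whenever $\vu_i, \vv_j \in \Re^n_+$, since the componentwise product of two nonnegative vectors is nonnegative. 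This immediately exhibits a CP decomposition of $\A \circ \B$ and settles part (i), which is also Proposition~6.5 of \cite{QL17}.

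For part (ii), I would show that the generating vectors $\{\vu_i \circ \vv_j\}$ span $\Re^n$, exactly as in Theorem~\ref{t4.1}(ii). Since each diagonal entry $b_{k\cdots k} = \sum_{j=1}^q v_{j,k}^m$ is positive and each $\vv_j$ is nonnegative, for every $k \in [n]$ there is an index $j_k$ with $v_{j_k,k} > 0$. For this fixed $j_k$, the linear map $\vx \mapsto \vx \circ \vv_{j_k}$ carries the spanning set $\{\vu_i\}$ onto a set whose real-linear span is its image, namely the coordinate subspace supported on $\supp(\vv_{j_k})$, and this subspace contains $\ve_k$ because $k \in \supp(\vv_{j_k})$. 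Collecting over all $k$ shows every $\ve_k$ lies in the span of $\{\vu_i \circ \vv_j\}$, so these vectors span $\Re^n$; combined with their nonnegativity from part (i), this gives $\A \circ \B \in SCP_{m,n}$.

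For part (iii), I would observe that $\B \in SCP_{m,n}$ forces every diagonal entry of $\B$ to be positive: if some $b_{k\cdots k} = \sum_j v_{j,k}^m$ vanished, then all nonnegative $\vv_j$ would have zero $k$th coordinate and hence lie in a proper coordinate hyperplane, contradicting the assumption that $\{\vv_j\}$ spans $\Re^n$. Thus part (iii) reduces to part (ii). Alternatively, I could reproduce the direct computation of Theorem~\ref{t4.1}(iii): writing $\ve_k = \sum_i c_{k,i}\vu_i = \sum_j d_{k,j}\vv_j$ and using $\ve_k = \ve_k \circ \ve_k = \sum_{i,j} c_{k,i}d_{k,j}(\vu_i \circ \vv_j)$ exhibits each $\ve_k$ in the span.

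I expect essentially no obstacle here, as every step transfers directly from the CD case. The single point deserving a sentence of care is that the spanning condition defining $SCP_{m,n}$ (and $SCD_{m,n}$) is \emph{real-linear} spanning of $\Re^n$, so the coefficients $c_{k,i}, d_{k,j}$ appearing in the spanning arguments may be negative even though the decomposition vectors $\vu_i, \vv_j$ are nonnegative. This is consistent, because nonnegativity is required only of the generating vectors $\vu_i \circ \vv_j$, which is guaranteed by part (i), and not of the coefficients used to express the $\ve_k$.
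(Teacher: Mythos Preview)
Your proposal is correct and follows essentially the same route as the paper: parts (i) and (iii) are Proposition~6.5 of \cite{QL17}, and the paper says the proof of (ii) ``is similar to the proof of Theorem~\ref{t4.1}(ii)'', which is exactly what you carry out. Your reduction of (iii) to (ii) via positivity of the diagonal entries is a minor (and perfectly valid) variant, but not a genuinely different argument.
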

	
	\begin{proof}
		(i) and (iii) are Proposition 6.5 of \cite{QL17}.   The proof of (ii) is similar to the proof of Theorem \ref{t4.1} (ii).
	\end{proof}
	
	CP and CD tensors appear in signal processing, statistics, and machine learning. Knowing that Hadamard products preserve these structures can simplify modeling and computation.   This is the significance of Theorems \ref{t4.1} and \ref{t4.2}.
	
	\section{CD Tensors}
	
	CD tensors appear in signal processing, statistics, and machine learning.   However, they have not been studied as CP tensors \cite{QL17}.   Theorems \ref{t3.1} and \ref{t4.1} stated some properties of CD tensors.  In this section, we study more about CD tensors.
	
	\subsection{Spectral Properties of CD Tensors}
	
	The following theorem is similar to Theorems 6.1 and 6.2 of \cite{QL17}, for CP tensors, and in a certain extent extends Theorems 6.1 and 6.2 of \cite{QL17}, as CP tensors are CD tensors.
	
	\begin{Thm} \label{t5.1}
		Suppose that $m, n \ge 2$ and $m$ is even.  Let $\A \in CD_{m, n}$.  Then we have the following conclusions.
		
		(i) All the H-eigenvalues of $\A$ is nonnegative.
		
		(ii) $\A$ is SCD if and only if $\A$ is PD.   In this case, all the H-eigenvalues of $\A$ are positive.
	\end{Thm}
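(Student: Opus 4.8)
The plan is to exploit the explicit evaluation of $\A\x^m$ that comes from a completely decomposable representation. Writing $\A = \sum_{j=1}^p \vu_j^m$, one has $\A\x^m = \sum_{j=1}^p (\vu_j^\top \x)^m$ for every $\x \in \Re^n$, and since $m$ is even each summand is nonnegative. This already places $\A$ in $PSD_{m,n}$ (consistent with the chain $CD_{m,n}\subset SOS_{m,n}\subset PSD_{m,n}$ from Theorem 2.x) and, more importantly, shows that $\A\x^m = 0$ holds precisely when $\vu_j^\top\x = 0$ for all $j$, i.e.\ when $\x$ is orthogonal to $\mathrm{span}\{\vu_1,\dots,\vu_p\}$. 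This single correspondence drives both parts.

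For part (i), I would first recall the standard H-eigenvalue identity: if $(\lambda,\x)$ is an H-eigenpair, so that $\A\x^{m-1} = \lambda\,\x^{[m-1]}$ with $x_i^{[m-1]} = x_i^{m-1}$ and $\x \in \Re^n$ nonzero, then contracting the eigen-equation with $\x$ yields $\A\x^m = \lambda \sum_{i=1}^n x_i^m$. Because $m$ is even, $\sum_{i=1}^n x_i^m > 0$ for $\x \neq \0$, and because $\A$ is PSD we have $\A\x^m \geq 0$; dividing gives $\lambda \geq 0$.

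For part (ii), I would prove the two implications separately, taking care with the existential quantifier in the definition of SCD. If $\A$ is SCD, fix a representation whose vectors $\{\vu_j\}$ span $\Re^n$; then no nonzero $\x$ can be orthogonal to all the $\vu_j$, so $\A\x^m = \sum_j (\vu_j^\top\x)^m > 0$ for every $\x \neq \0$, i.e.\ $\A$ is PD. Conversely, suppose $\A$ is PD and let $\A = \sum_{j=1}^p \vu_j^m$ be \emph{any} CD representation. If these vectors failed to span $\Re^n$, there would exist $\x \neq \0$ orthogonal to all of them, forcing $\A\x^m = 0$ and contradicting positive definiteness; hence the vectors span and $\A$ is SCD. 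The positivity of the H-eigenvalues in this case is then immediate from the identity $\A\x^m = \lambda\sum_{i=1}^n x_i^m$ together with $\A\x^m > 0$.

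The only genuinely delicate point is the logical structure of (ii): SCD asserts the existence of \emph{one} spanning representation, whereas PD is an intrinsic property of $\A$. The observation that resolves this is that positive definiteness forces \emph{every} CD representation of $\A$ to be spanning, so the ``exists'' in the definition of SCD and the intrinsic PD condition match up without ambiguity. Everything else reduces to the elementary correspondence between the zero set of the form $\x \mapsto \sum_j (\vu_j^\top\x)^m$ and the orthogonal complement of $\mathrm{span}\{\vu_j\}$, so I expect no computational obstacle beyond stating this correspondence cleanly.
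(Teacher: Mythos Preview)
Your proposal is correct and follows essentially the same route as the paper: both arguments use the explicit formula $\A\x^m=\sum_j(\vu_j^\top\x)^m$, the contraction identity $\A\x^m=\lambda\sum_i x_i^m$ for H-eigenpairs, and the observation that $\A\x^m=0$ iff $\x\perp\mathrm{span}\{\vu_j\}$. Your remark that PD forces \emph{every} CD representation to be spanning is a slight sharpening of what the paper states, but the underlying logic is identical.
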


	\begin{proof}
		(i) Since $\A$ is a CD tensor and $m$ is even, we have $\A \in SOS_{m,n} \subset PSD_{m,n}$. Let $\lambda$ be an H-eigenvalue of $\A$ with corresponding eigenvector $\x \neq \0$. Then, by the definition of H-eigenvalue, we have $\A \x^{m-1} = \lambda \x^{[m-1]}$, where $\x^{[m-1]}$ denotes the vector with entries $x_i^{m-1}$. Multiplying both sides by $\x$ (componentwise) and summing, we get $\A \x^m = \lambda \sum_{i=1}^n x_i^m$. Since $m$ is even, $\sum_{i=1}^n x_i^m > 0$ for $\x \neq \0$. Moreover, because $\A$ is PSD, $\A \x^m \ge 0$. Hence, $\lambda \ge 0$.
		
		(ii) First, assume $\A$ is SCD. Then there exists a representation $\A = \sum_{j=1}^p \vu_j^m$ with the vectors $\{\vu_j\}_{j=1}^p$ spanning $\Re^n$. For any $\x \neq \0$, since the $\vu_j$ span $\Re^n$, there exists some $j$ such that $\vu_j \cdot \x \neq 0$. Note that $\A \x^m = \sum_{j=1}^p (\vu_j \cdot \x)^m$. Since $m$ is even, each term $(\vu_j \cdot \x)^m$ is nonnegative, and at least one is positive. Therefore, $\A \x^m > 0$, so $\A$ is PD.
		
		Conversely, assume $\A$ is PD and CD. Then there exists a representation $\A = \sum_{j=1}^p \vu_j^m$. We claim that the set $\{\vu_j\}_{j=1}^p$ must span $\Re^n$. Suppose not, then there exists $\x \neq \0$ such that $\vu_j \cdot \x = 0$ for all $j$. Then $\A \x^m = \sum_{j=1}^p (\vu_j \cdot \x)^m = 0$, contradicting that $\A$ is PD. Hence, $\A$ is SCD.
		
		Moreover, if $\A$ is PD (and hence SCD), then for any H-eigenvalue $\lambda$ of $\A$ with eigenvector $\x$ (with $\|\x\|_m = 1$), we have $\lambda = \A \x^m > 0$. Therefore, all H-eigenvalues are positive.
	\end{proof}
	
	\subsection{Complete Hankel Tensors}
	
	
	Let $m, n \ge 2$, and $\A = (a_{i_1\cdots i_m}) \in S_{m ,n}$ be defined by
	\begin{equation} \label{Hankel}
		a_{i_1\cdots i_m} = h_{i_1+\cdots + i_m - n+1},
	\end{equation}
	for $i_1, \cdots, i_m \in [n]$, where $\vh = (h_1, \cdots, h_{(n-1)m+1})^\top \in \Re^{(n-1)m+1}$.    Then $\A$ is called an {\bf Hankel tensor}, $\vh$ is called the generating vector of $\A$. Hankel tensors have wide applications in data analysis and signal processing \cite{Qi14, QL17}.
	
	Let $\vu = (1, u, u^2, \cdots, u^{n-1})^\top \in \Re^n$.   Then $\vu$ is called a {\bf Vandermonde vector}.   If $\A \in S_{m, n}$ has the form
	\begin{equation} \label{Hankel1}
		\A = \sum_{j=1}^r \alpha_j (\vu_j)^m,
	\end{equation}
	where $\vu_j$ are Vandermonde vectors for $j \in [r]$, and $\vu_j \not = \vu_k$ for $j \not = k$, then we say that $\A$ has a {\bf Vandermonde decomposition}.   By Proposition 5.57 of \cite{QL17}, a symmetric tensor is a Hankel tensor if and only if it has a Vandermonde decomposition.   If $\alpha_j \ge 0$ for $j \in [r]$, then we say that $\A$ is a {\bf complete Hankel tensor}.
	By the definition of CD tensors, complete Hankel tensors are CD tensors.
	
	\subsection{The Inheritance Property of Hankel Tensors}
	
	Hankel tensors has an inheritance property.   Let $\A =(a_{i_1\cdots i_m}) \in S_{m, n}$ be a Hankal  tensor  with a generating vector $\vh \in \Re^{(n-1)m+1}$.   Let $n-1 = (p-1)q$, where $q \ge 2$ and $p \ge 3$.  Then $\vh$ is also a generating vector of a Hankel tensor $\B = (b_{j_1\cdots j_{qm}} \in S_{qm, p}$, with
	\begin{equation} \label{Hankel2}
		b_{j_1\cdots j_{qm}} = h_{i_1+\cdots + i_{qm} - n+1},
	\end{equation}
	for $j_1, \cdots, j_{qm} \in [p]$.   It was shown in \cite[Theorem 5.53]{QL17} that $\B$ is PSD, or PD or SOS if $\A$ is PSD, or PD or SOS, respectively.
	
	\begin{Thm}
		Under the above setting, if $\A$ is CD, then $\B$ is CD.
	\end{Thm}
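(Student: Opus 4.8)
The plan is to reduce the statement to a property of the common generating vector $\vh$ that is, by construction, unchanged when one passes from the pair $(m,n)$ to the pair $(qm,p)$. Throughout I take $m=2k$ even, since for odd $m$ one has $CD_{m,n}=S_{m,n}$ and the claim is vacuous (and $qm$ is then even, matching the even‑order setting of \cite[Theorem 5.53]{QL17}). For a Hankel tensor $\A\in S_{m,n}$ let $g_s$, $s=0,\dots,m(n-1)$, be the common value of the entries $a_{i_1\cdots i_m}$ over all indices with $(i_1-1)+\cdots+(i_m-1)=s$ (well defined by the Hankel structure), and let $L$ be the linear functional on univariate polynomials of degree at most $m(n-1)$ with $L(t^s)=g_s$. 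The key observation is that, because $(p-1)(qm)=(n-1)m$, the tensor $\B$ has exactly the same sequence $g_s$ on exactly the same index range; hence $\A$ and $\B$ carry the identical functional $L$ on the identical polynomial space.

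The central step I would prove is the characterization: a Hankel tensor is CD if and only if $L$ is nonnegative on the cone $\mathrm{Pos}_{m(n-1)}$ of univariate polynomials of degree at most $m(n-1)$ that are nonnegative on $\Re$. First I would record the contraction identity
\begin{equation*}
\A\bullet\C=L(p_\C),\qquad p_\C(t):=\C\vw_t^m=\sum_{i_1,\dots,i_m}c_{i_1\cdots i_m}\,t^{(i_1-1)+\cdots+(i_m-1)},
\end{equation*}
valid for every $\C=(c_{i_1\cdots i_m})\in S_{m,n}$, where $\vw_t=(1,t,\dots,t^{n-1})^\top$ is the Vandermonde vector; this follows from the Hankel structure by grouping the summation according to $s=(i_1-1)+\cdots+(i_m-1)$. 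Using the duality $CD_{m,n}=PSD_{m,n}^*$ (valid for even $m$, \cite{LQY15}), the ``if'' direction is then easy: if $L\ge 0$ on $\mathrm{Pos}_{m(n-1)}$, then for every PSD tensor $\C$ the polynomial $p_\C(t)=\C\vw_t^m$ is nonnegative on $\Re$ (since $\vw_t\in\Re^n$) and of degree at most $m(n-1)$, so $\A\bullet\C=L(p_\C)\ge 0$, giving $\A\in PSD_{m,n}^*=CD_{m,n}$.

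The ``only if'' direction is where the work lies, and I expect it to be the main obstacle: I must show that every $p\in\mathrm{Pos}_{m(n-1)}$ is realized as $p=p_\C$ for some PSD $\C$. Here I would invoke the classical fact that a nonnegative univariate polynomial is a sum of squares, writing $p=\sum_j q_j^2$ with $\deg q_j\le k(n-1)$. Each $q_j$ can be expressed as $q_j(t)=\mathcal{G}_j\vw_t^k$ for a suitable $\mathcal{G}_j\in S_{k,n}$, because the entries of $\vw_t^{\otimes k}$ already exhaust the monomials $1,t,\dots,t^{k(n-1)}$; then the tensor $\C_j\in S_{m,n}$ defined by $\C_j\x^m=(\mathcal{G}_j\x^k)^2$ is SOS, hence PSD, and satisfies $p_{\C_j}=q_j^2$. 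Summing gives $\C=\sum_j\C_j\in SOS_{m,n}\subset PSD_{m,n}$ with $p_\C=p$. Consequently, if $\A$ is CD then $L(p)=\A\bullet\C\ge 0$. The only delicate points are the degree bookkeeping $m(n-1)=2k(n-1)$ and the availability of all monomials up to $t^{k(n-1)}$ in $\vw_t^{\otimes k}$, both of which hold precisely because $m=2k$.

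Finally I would assemble the conclusion. The characterization says that CD-ness of a Hankel tensor is exactly the condition ``$L\ge 0$ on $\mathrm{Pos}_{m(n-1)}$'', which refers only to the functional $L$ and the integer $m(n-1)$. Since $\B$ shares the very same $L$ and $(p-1)(qm)=(n-1)m=m(n-1)$, applying the ``only if'' direction to $\A$ and the ``if'' direction to $\B$ yields $\A\text{ CD}\Rightarrow L\ge 0\text{ on }\mathrm{Pos}_{m(n-1)}\Rightarrow \B\text{ CD}$. It is worth stressing that this route deliberately avoids complete Hankel tensors: a CD Hankel tensor need not admit a nonnegative Vandermonde decomposition (already for $m=2$, $\mathrm{diag}(0,1)$ is CD but not complete Hankel), so the argument is genuinely a positivity-of-functional statement rather than the transfer of an atomic representation.
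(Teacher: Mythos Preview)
Your duality argument via the functional $L$ is correct and genuinely different from the paper's route. The paper's proof begins by asserting that a CD Hankel tensor is automatically a complete Hankel tensor, and then transfers the resulting nonnegative Vandermonde decomposition $\A=\sum_j\alpha_j\vu_j^m$ to $\B$ by replacing each $(1,u,\dots,u^{n-1})^\top\in\Re^n$ with $(1,u^q,\dots,u^{(p-1)q})^\top\in\Re^p$. That opening step is exactly what your closing remark flags as false: $\mathrm{diag}(0,1)\in S_{2,2}$ is a CD Hankel matrix admitting no nonnegative Vandermonde decomposition, since the $(1,1)$ entry of any such decomposition equals $\sum_j\alpha_j$. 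Your approach sidesteps this gap entirely: CD-ness of a Hankel tensor is encoded in the nonnegativity of $L$ on $\mathrm{Pos}_{m(n-1)}$, the ``only if'' direction goes through because univariate nonnegative polynomials are sums of squares realizable as $p_\C$ with $\C$ SOS, and the arithmetic $(p-1)(qm)=(n-1)m$ makes the condition identical for $\A$ and $\B$. The paper's decomposition-transfer idea is clean and works verbatim for complete Hankel tensors, but for the full CD Hankel class your functional characterization is what actually carries the proof.

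One minor slip: the odd-$m$ case is not vacuous. If $m$ is odd and $q$ is even then $qm$ is even, so $\A$ is automatically CD while $\B$ being CD is a genuine constraint; choosing $\vh$ so that $\B$ is not even PSD shows the statement as literally written fails there. Your restriction to $m=2k$ is therefore the right hypothesis---just drop the parenthetical claim of vacuity.
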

	
	\begin{proof}
		Since $\A$ is a CD tensor and a Hankel tensor, it is a complete Hankel tensor. Therefore, $\A$ admits a Vandermonde decomposition with nonnegative coefficients:
		\[
		\A = \sum_{j=1}^r \alpha_j (\vu_j)^m,
		\]
		where $\alpha_j \geq 0$ and $\vu_j = (1, u_j, u_j^2, \ldots, u_j^{n-1})^\top$ are Vandermonde vectors in $\Re^n$.
		
		Now consider the Hankel tensor $\B \in S_{qm,p}$ defined by the same generating vector $\vh$. For each Vandermonde vector $\vu_j \in \Re^n$, define the corresponding Vandermonde vector $\vw_j \in \Re^p$ by:
		\[
		\vw_j = (1, u_j^q, u_j^{2q}, \ldots, u_j^{(p-1)q})^\top.
		\]
		Note that since $n-1 = (p-1)q$, the exponents in $\vw_j$ exactly match the required range.
		
		We claim that $\B$ has the Vandermonde decomposition:
		\[
		\B = \sum_{j=1}^r \alpha_j (\vw_j)^{qm}.
		\]
		
		To verify this, consider the generating vector $\vh$. From the Vandermonde decomposition of $\A$, we have:
		\[
		h_i = \sum_{j=1}^r \alpha_j u_j^{i-1}, \quad i = 1, \ldots, (n-1)m+1.
		\]
		
		For $\B$, the entry $b_{j_1\cdots j_{qm}}$ is determined by:
		\[
		b_{j_1\cdots j_{qm}} = h_{j_1+\cdots+j_{qm}-p+1}.
		\]
		
		On the other hand, the corresponding entry in $\sum_{j=1}^r \alpha_j (\vw_j)^{qm}$ is:
		\[
		\sum_{j=1}^r \alpha_j \left(u_j^{q(j_1-1)} \cdots u_j^{q(j_{qm}-1)}\right) = \sum_{j=1}^r \alpha_j u_j^{q(j_1+\cdots+j_{qm}-qm)}.
		\]
		
		Since $j_1+\cdots+j_{qm}-p+1 = q(j_1+\cdots+j_{qm}-qm)$ when $n-1 = (p-1)q$, the two expressions are equal. Therefore, $\B$ admits a Vandermonde decomposition with nonnegative coefficients $\alpha_j \geq 0$, which means $\B$ is a complete Hankel tensor and hence CD.
	\end{proof}
	
	{\subsection{A Hankel tensor that is PSD but not  CD}} \label{subsec:example}
	
	Let $m=4$ and $n=2$,
	$$\A = \sum_{j=1}^3 \alpha_j \vu_j^4,$$
	where $\alpha_1 = \alpha_2 = 1$, $\alpha_3 = -0.0001$, $\vu_1 = (1, 1)^\top$, $\vu_2 = (1, 1000)^\top$ and $\vu_3 = (1, 0.0001)^\top$. Then $\A$ is a Hankel tensor.
	
	
	
	
		
		
		\textbf{$\A$ is PD and hence PSD.} The quartic form of $\A$ is:
		\[
		\A\x^4 = (x_1 + x_2)^4 + (x_1 + 1000x_2)^4 - 0.0001(x_1 + 0.0001x_2)^4
		\]
		The H-eigenvalues of $\A$ are all positive, with the smallest H-eigenvalue approximately  {0.9956.} 
		 Therefore, $\A$ is PD and hence PSD.
		

%
		
		\textbf{$\A$ is not a CD tensor.} By definition, a CD tensor must be expressible as a sum of $m$th powers of vectors with nonnegative coefficients. The given decomposition contains a negative coefficient ($\alpha_3 = -0.0001$), which violates this requirement. 		
		Moreover, moment analysis confirms that there exists no alternative decomposition of $\A$ as a sum of fourth powers with nonnegative coefficients.
		
		This aligns with Theorem 2.1(iv), which states that for $m=4$ and $n=2$, $CD_{4,2}$ is a proper subset of $SOS_{4,2} = PSD_{4,2}$, {i.e., $CD_{4,2} \subsetneq SOS_{4,2}$.}
	
%
%
%
	
		{\subsection{A Hankel tensor that is PSD but not  SOS}} \label{subsec:example2}
{By Hilbert \cite{Hi88},  a PSD tensor in $ S_{m,2}$ is also SOS.}  	Thus, we may consider an example that $m=n=4$.
	Let
	$$\A = \sum_{j=1}^5 \alpha_j \vu_j^4,$$
	where $\alpha_1 = \alpha_2 = \alpha_3 = \alpha_4 = 1$, $\alpha_5 = -0.0001$, $\vu_1 = (1, 1, 1, 1)^\top$, $\vu_2 = (1,10,100, 1000)^\top$, $\vu_3 = (1,20,400, 8000)^\top$, $\vu_4 = (1,50,2500, 125000)^\top$, and $\vu_5 = (1, 0.0001, 0.00000001, 0.000000000001)^\top$. Then $\A$ is a Hankel tensor.    
	
%
%
%
		
		\textbf{$\mathcal{A}$ is PD.} 		
		The quartic form of $\mathcal{A}$ is:
		\[
		\mathcal{A}\mathbf{x}^4 = \sum_{j=1}^4 (\mathbf{u}_j \cdot \mathbf{x})^4 - 0.0001(\mathbf{u}_5 \cdot \mathbf{x})^4.
		\]
			We can show that the positive terms dominate the negative term. Note that:
			For any $\mathbf{x} \neq \mathbf{0}$, we have:
			\[
			\sum_{j=1}^4 (\mathbf{u}_j \cdot \mathbf{x})^4 \geq 4\|\mathbf{x}\|^4,
			\]
			and
			\[
			|\mathbf{u}_5 \cdot \mathbf{x}|^4 \leq \|\mathbf{x}\|^4.
			\]
			So
			\[
			\mathcal{A}\mathbf{x}^4 \geq 4\|\mathbf{x}\|^4 - 0.0001\|\mathbf{x}\|^4 = 3.9999\|\mathbf{x}\|^4 > 0.
			\]
		Therefore, $\mathcal{A}$ is PD.
		
%
		\textbf{$\mathcal{A}$ is not a CD tensor.}  		
		By definition, a CD tensor must be expressible as a sum of $m$th powers of vectors with nonnegative coefficients. The given decomposition contains a negative coefficient ($\alpha_5 = -0.0001$), which violates this requirement.
		Moreover, there exists no alternative decomposition of $\mathcal{A}$ as a sum of fourth powers with nonnegative coefficients. 

%
		
		\textbf{$\mathcal{A}$ is not SOS.}  Using the scientific computing software MATLAB, equipped with the YALMIP and SEDUMI packages, we verified that tensor A is not SOS, since the corresponding SOS optimization problem has no feasible solution.
This example 
answers the open question from Subsection 5.7.10 of \cite{QL17} in the affirmative: an even order PSD Hankel tensor may not be SOS.
	
%
	
	This example illustrates the strict inclusions $CD_{4,4} \subsetneq SOS_{4,4} \subsetneq PSD_{4,4}$ mentioned in Theorem 2.1(iv). 

\bigskip
	
	
	
	
	
	\section{SOS$^*$ Tensors}
	
	\subsection*{Note on Notation and Normalization}
	In this section, we work with the \textbf{unnormalized monomial basis}. Specifically, for homogeneous polynomials of degree $k$, we use the basis $\{\mathbf{x}^\beta\}$ without multinomial coefficients. The inner product is defined as:
	\[
	\mathcal{A} \bullet \mathcal{B} = \sum_{|\alpha|=m} a_\alpha b_\alpha.
	\]
	This convention simplifies the moment matrix and polynomial representations while maintaining mathematical consistency throughout this section.
	
	Suppose that $m, n \ge 2$ and $m = 2k$ is even. Denote the dual cone of $SOS_{m,n}$ as $SOS_{m,n}^*$. A tensor $\mathcal{B} \in S_{m,n}$ is called an \textbf{SOS$^*$ tensor} if $\mathcal{B} \in SOS_{m,n}^*$, i.e.,
	\[
	\mathcal{A} \bullet \mathcal{B} \geq 0 \quad \text{for all } \mathcal{A} \in SOS_{m,n}.
	\]
	
	From the definition, it follows immediately that every SOS$^*$ tensor is positive semidefinite (PSD), since $SOS_{m,n}$ contains all rank-one PSD tensors of the form $(\mathbf{x}^{\otimes k})(\mathbf{x}^{\otimes k})^\top$.
	
	\subsection{Answers to Fundamental Questions}
	
	We now address two fundamental questions on SOS and SOS$^*$ tensors.
	
	\begin{enumerate}
		\item \textbf{Is an SOS tensor always an SOS$^*$ tensor?} \\
		{\textbf{No.}} The SOS cone is not self-dual in general. For example, when $m = 4$ and $n = 2$, we have $SOS_{4,2} = PSD_{4,2}$ and $SOS_{4,2}^* = CD_{4,2}$.
		
		\item \textbf{Is an SOS$^*$ tensor always an SOS tensor?} \\
		\textbf{No.} When $SOS_{m,n} \subsetneq PSD_{m,n}$ (e.g., $m = 4$, $n \geq 4$), there exist PSD tensors that are not SOS. Some of these may lie in $SOS_{m,n}^*$, providing counterexamples.
	\end{enumerate}
	
	\subsection{Characterizations of SOS$^*$ Tensors}
	
	Several equivalent characterizations of SOS$^*$ tensors are available:
	
	\subsubsection{(a) Moment Matrix Characterization}
	Let $\mathcal{B} \in S_{m,n}$ with $m = 2k$ even. The associated homogeneous polynomial is:
	\[
	\mathcal{B}(\mathbf{x}) = \sum_{|\alpha| = 2k} b_{\alpha} \mathbf{x}^{\alpha}
	\]
	where $\alpha = (\alpha_1, \ldots, \alpha_n)$ is a multi-index with $|\alpha| = \alpha_1 + \cdots + \alpha_n = 2k$.
	
	The \textbf{moment matrix} $M(\mathcal{B})$ is a symmetric matrix indexed by all multi-indices $\beta, \gamma$ with $|\beta| = |\gamma| = k$, with entries defined by:
	\[
	M(\mathcal{B})_{\beta,\gamma} = b_{\beta+\gamma}
	\]
	where $\beta + \gamma = (\beta_1+\gamma_1, \ldots, \beta_n+\gamma_n)$.
	
	\begin{Prop}
		With the above notation, we have
		\[
		\mathcal{B} \in SOS_{m,n}^* \iff M(\mathcal{B}) \succeq 0
		\]
	\end{Prop}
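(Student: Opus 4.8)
The plan is to reduce the dual-cone membership condition, which quantifies over the infinitely many elements of $SOS_{m,n}$, to a positive-semidefiniteness condition by exploiting that the SOS cone is the conical hull of single squares. First I would observe that $SOS_{m,n}$ is generated as a convex cone by the squares $p^2$ of single homogeneous degree-$k$ forms $p$: indeed, any $\A \in SOS_{m,n}$ satisfies $\A\x^m = \sum_j (\mathcal{C}_j\x^k)^2$, and each summand $(\mathcal{C}_j\x^k)^2$ is the square of one degree-$k$ form. Since the defining inequality $\A \bullet \B \ge 0$ is linear in $\A$, it holds for all $\A \in SOS_{m,n}$ if and only if it holds on these generators. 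Hence $\B \in SOS_{m,n}^*$ if and only if $\B \bullet (p^2) \ge 0$ for every homogeneous polynomial $p$ of degree $k$.

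Next I would compute $\B \bullet (p^2)$ explicitly in the unnormalized basis. Writing $p(\x) = \sum_{|\beta|=k} c_\beta \x^\beta$, expanding the square gives $p(\x)^2 = \sum_{|\alpha|=2k}\left(\sum_{\beta+\gamma=\alpha} c_\beta c_\gamma\right)\x^\alpha$, so the degree-$m$ tensor $\A$ associated with $p^2$ has coefficients $a_\alpha = \sum_{\beta+\gamma=\alpha} c_\beta c_\gamma$. Substituting into the inner product $\B \bullet \A = \sum_{|\alpha|=2k} b_\alpha a_\alpha$ and reindexing by the pair $(\beta,\gamma)$ yields
$$\B \bullet (p^2) = \sum_{|\beta|=|\gamma|=k} c_\beta\, b_{\beta+\gamma}\, c_\gamma = \vc^\top M(\B) \vc,$$
where $\vc = (c_\beta)_{|\beta|=k}$ is the coefficient vector of $p$ and $M(\B)_{\beta,\gamma} = b_{\beta+\gamma}$ is exactly the moment matrix. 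This identity is the crux of the argument, and it is precisely here that the unnormalized convention earns its keep: without it, stray multinomial factors would appear and the pairing would not reduce cleanly to the quadratic form of $M(\B)$.

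Finally I would read off both implications from the identity $\B \bullet (p^2) = \vc^\top M(\B)\vc$, together with the elementary fact that the monomials $\{\x^\beta : |\beta| = k\}$ form a basis, so that $\vc$ ranges over all of $\Re^N$ as $p$ ranges over all degree-$k$ forms. If $M(\B) \succeq 0$, then $\vc^\top M(\B)\vc \ge 0$ for every $\vc$, hence $\B \bullet (p^2) \ge 0$ for all $p$, and by the generator reduction $\B \in SOS_{m,n}^*$. Conversely, if $\B \in SOS_{m,n}^*$, then taking $\A = p^2$, which lies in $SOS_{m,n}$, gives $\vc^\top M(\B)\vc = \B \bullet (p^2) \ge 0$ for every $\vc \in \Re^N$, which is the definition of $M(\B) \succeq 0$. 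The main obstacle is the bookkeeping in the middle step --- correctly matching the convolution $a_\alpha = \sum_{\beta+\gamma=\alpha} c_\beta c_\gamma$ with the index structure of $M(\B)$ --- rather than any conceptual difficulty; the generator reduction and the final PSD equivalence are routine once the pairing identity is in hand.
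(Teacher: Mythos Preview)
Your proof is correct and follows essentially the same route as the paper: both hinge on the identity $\B \bullet (p^2) = \vc^\top M(\B)\,\vc$, and your $(\Rightarrow)$ direction is identical to the paper's. For $(\Leftarrow)$ the paper packages the argument via the Gram matrix, writing $\A \bullet \B = \operatorname{trace}(G\, M(\B))$ for $G \succeq 0$ and invoking nonnegativity of the trace of a product of PSD matrices, whereas you first reduce to single-square generators and then apply the quadratic-form identity term by term; these are equivalent, since decomposing $G = \sum_j \vc_j\vc_j^\top$ turns the trace formula into your sum of quadratic forms.
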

	
	\begin{proof}
		\textbf{($\Leftarrow$):} Assume $M(\mathcal{B}) \succeq 0$. For any $\mathcal{A} \in SOS_{m,n}$, there exists a positive semidefinite Gram matrix $G \succeq 0$ such that:
		\[
		\mathcal{A}(\mathbf{x}) = (\mathbf{x}^{\otimes k})^\top G (\mathbf{x}^{\otimes k})
		\]
		The inner product $\mathcal{A} \bullet \mathcal{B}$ can be expressed as:
		\[
		\mathcal{A} \bullet \mathcal{B} = \sum_{|\alpha|=2k} a_{\alpha} b_{\alpha}
		\]
		Using the moment matrix formulation, this equals:
		\[
		\mathcal{A} \bullet \mathcal{B} = \operatorname{trace}(G M(\mathcal{B}))
		\]
		Since both $G \succeq 0$ and $M(\mathcal{B}) \succeq 0$, their trace product is nonnegative. Therefore, $\mathcal{A} \bullet \mathcal{B} \geq 0$ for all $\mathcal{A} \in SOS_{m,n}$, so $\mathcal{B} \in SOS_{m,n}^*$.
		
		\textbf{($\Rightarrow$):} Assume $\mathcal{B} \in SOS_{m,n}^*$ but $M(\mathcal{B}) \not\succeq 0$. Then there exists a vector $\mathbf{c} \neq \mathbf{0}$ (indexed by multi-indices of degree $k$) such that:
		\[
		\mathbf{c}^\top M(\mathcal{B}) \mathbf{c} < 0
		\]
		Define the polynomial $p(\mathbf{x}) = \sum_{|\beta|=k} c_{\beta} \mathbf{x}^{\beta}$ and consider the rank-one SOS tensor $\mathcal{A}$ representing $p(\mathbf{x})^2$. Then:
		\[
		\mathcal{A} \bullet \mathcal{B} = \mathbf{c}^\top M(\mathcal{B}) \mathbf{c} < 0
		\]
		This contradicts $\mathcal{B} \in SOS_{m,n}^*$. Therefore, $M(\mathcal{B}) \succeq 0$.
	\end{proof}
	
	\subsubsection{(b) Polynomial Square Nonnegativity}
	For any homogeneous polynomial $p \in \mathbb{R}[x_1, \ldots, x_n]_k$ of degree $k$, let $p^2$ denote the symmetric tensor representation of the polynomial $p(\mathbf{x})^2$ (which has degree $2k$). Explicitly, if:
	\[
	p(\mathbf{x}) = \sum_{|\beta|=k} c_{\beta} \mathbf{x}^{\beta}
	\]
	then:
	\[
	p(\mathbf{x})^2 = \sum_{|\alpha|=2k} \left( \sum_{\beta+\gamma=\alpha} c_{\beta} c_{\gamma} \right) \mathbf{x}^{\alpha}
	\]
	The corresponding tensor $(p^2)$ has entries:
	\[
	(p^2)_{\alpha} = \sum_{\beta+\gamma=\alpha} c_{\beta} c_{\gamma}
	\]
	
	\begin{Prop}
		With the above notation, we have
		\[
		\mathcal{B} \in SOS_{m,n}^* \iff \mathcal{B}(p^2) \geq 0 \quad \forall p \in \mathbb{R}[x_1, \ldots, x_n]_k
		\]
	\end{Prop}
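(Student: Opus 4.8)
The plan is to prove the equivalence by connecting the condition $\mathcal{B}(p^2) \geq 0$ to the moment-matrix characterization established in the previous proposition, since the polynomial $p(\mathbf{x})^2$ is precisely the building block of the SOS cone. First I would unwind the notation: given $p(\mathbf{x}) = \sum_{|\beta|=k} c_\beta \mathbf{x}^\beta$, the quantity $\mathcal{B}(p^2)$ means the inner product $\mathcal{B} \bullet (p^2)$ under the unnormalized convention fixed at the start of this section, which by the definitions spelled out equals $\sum_{|\alpha|=2k} b_\alpha (p^2)_\alpha = \sum_{|\alpha|=2k} b_\alpha \sum_{\beta+\gamma=\alpha} c_\beta c_\gamma$. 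Reindexing the double sum over all pairs $(\beta,\gamma)$ with $|\beta|=|\gamma|=k$ and using $M(\mathcal{B})_{\beta,\gamma} = b_{\beta+\gamma}$, this is exactly $\sum_{\beta,\gamma} c_\beta\, b_{\beta+\gamma}\, c_\gamma = \mathbf{c}^\top M(\mathcal{B})\, \mathbf{c}$. So the key identity to record is
\[
\mathcal{B}(p^2) = \mathbf{c}^\top M(\mathcal{B})\, \mathbf{c}.
\]

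With this identity in hand the two directions are short. For $(\Rightarrow)$, if $\mathcal{B} \in SOS_{m,n}^*$, then for every degree-$k$ homogeneous polynomial $p$ the tensor $(p^2)$ represents the square $p(\mathbf{x})^2$, which is manifestly an SOS tensor (a single square), hence $(p^2) \in SOS_{m,n}$; the defining property of the dual cone gives $\mathcal{B} \bullet (p^2) = \mathcal{B}(p^2) \geq 0$. For $(\Leftarrow)$, suppose $\mathcal{B}(p^2) \geq 0$ for all such $p$. Take any $\mathcal{A} \in SOS_{m,n}$ and write $\mathcal{A}\mathbf{x}^m = \sum_{i=1}^r (\mathcal{C}_i \mathbf{x}^k)^2$ as in the SOS definition; each $\mathcal{C}_i \mathbf{x}^k$ is a degree-$k$ homogeneous polynomial $p_i$, so by linearity of the inner product $\mathcal{A} \bullet \mathcal{B} = \sum_{i=1}^r \mathcal{B}(p_i^2) \geq 0$, whence $\mathcal{B} \in SOS_{m,n}^*$. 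This second direction is essentially the statement that squares generate the SOS cone.

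I expect the main obstacle to be purely bookkeeping rather than conceptual: I must verify that the reindexing from $\sum_{|\alpha|=2k}\sum_{\beta+\gamma=\alpha}$ to $\sum_{\beta,\gamma}$ is a genuine bijection and that no multinomial or multiplicity factors intervene. This is exactly where the normalization note at the head of the section does the work, since the unnormalized basis $\{\mathbf{x}^\beta\}$ makes the coefficient of $\mathbf{x}^\alpha$ in $p^2$ equal to $\sum_{\beta+\gamma=\alpha} c_\beta c_\gamma$ with no extra combinatorial weights, and makes the inner product $\mathcal{A}\bullet\mathcal{B} = \sum_\alpha a_\alpha b_\alpha$ a plain coordinate pairing. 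I would flag that this clean identity $\mathcal{B}(p^2) = \mathbf{c}^\top M(\mathcal{B})\mathbf{c}$ is precisely the identity used implicitly in the $(\Rightarrow)$ direction of the moment-matrix proposition, so an alternative, even shorter route is simply to cite the previous proposition: $M(\mathcal{B}) \succeq 0$ iff $\mathbf{c}^\top M(\mathcal{B})\mathbf{c} \geq 0$ for all $\mathbf{c}$, iff $\mathcal{B}(p^2) \geq 0$ for all $p$, iff $\mathcal{B} \in SOS_{m,n}^*$. I would present the direct argument above as the primary proof and note the moment-matrix equivalence as the reason both characterizations are consistent.
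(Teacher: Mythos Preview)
Your proof is correct and follows essentially the same approach as the paper: both directions use that single squares $(p^2)$ lie in $SOS_{m,n}$ and that every SOS tensor decomposes as a sum of such squares, with the identity $\mathcal{B}(p^2) = \mathbf{c}^\top M(\mathcal{B})\,\mathbf{c}$ serving as the bridge to the moment-matrix characterization. The only cosmetic difference is ordering---the paper proves the two implications first and records the moment-matrix identity afterward as a remark, whereas you lead with the identity.
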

	
	\begin{proof}
		\textbf{($\Leftarrow$):} Assume $\mathcal{B}(p^2) \geq 0$ for all $p$ of degree $k$. Any $\mathcal{A} \in SOS_{m,n}$ can be written as:
		\[
		\mathcal{A} = \sum_{i=1}^r (p_i)^2
		\]
		for some polynomials $p_i$ of degree $k$. Then:
		\[
		\mathcal{A} \bullet \mathcal{B} = \sum_{i=1}^r (p_i)^2 \bullet \mathcal{B} = \sum_{i=1}^r \mathcal{B}((p_i)^2) \geq 0
		\]
		Therefore, $\mathcal{B} \in SOS_{m,n}^*$.
		
		\textbf{($\Rightarrow$):} Assume $\mathcal{B} \in SOS_{m,n}^*$. For any $p$ of degree $k$, the tensor $(p^2)$ is clearly in $SOS_{m,n}$ (it's a rank-one SOS tensor). Therefore:
		\[
		\mathcal{B}(p^2) = (p^2) \bullet \mathcal{B} \geq 0
		\]
		by the definition of the dual cone.
	\end{proof}
	
	\textbf{Connection to the moment matrix:} If $p(\mathbf{x}) = \sum_{|\beta|=k} c_{\beta} \mathbf{x}^{\beta}$, then:
	\[
	\mathcal{B}(p^2) = \sum_{|\alpha|=2k} b_{\alpha} (p^2)_{\alpha} = \sum_{|\alpha|=2k} b_{\alpha} \left( \sum_{\beta+\gamma=\alpha} c_{\beta} c_{\gamma} \right)\]
	\[ = \sum_{|\beta|=|\gamma|=k} c_{\beta} c_{\gamma} b_{\beta+\gamma} = \mathbf{c}^\top M(\mathcal{B}) \mathbf{c}
	\]
	This explicitly shows the equivalence between the two characterizations.

	\subsubsection{(c) Cone Inclusion Relations}
	From the duality relations established in Theorem 2.1, we have:
	\[
	CD_{m,n} \subseteq SOS_{m,n}^* \subseteq PSD_{m,n}.
	\]
	Thus:
	\begin{itemize}
		\item Every completely decomposable (CD) tensor is SOS$^*$.
		\item Every SOS$^*$ tensor is PSD.
	\end{itemize}
	
	In special cases where $SOS_{m,n} = PSD_{m,n}$ (i.e., $m = 2$, or $n = 2$, or $m = 4$ and $n = 3$), we have $SOS_{m,n}^* = CD_{m,n}$.

	\subsection{Significance and Applications}
	
	The SOS$^*$ cone provides a natural dual characterization of the SOS cone, which is crucial in optimization theory. In particular:
	\begin{itemize}
		\item The moment matrix characterization connects SOS$^*$ tensors with semidefinite programming and moment problems.
		\item The inclusion relations clarify the hierarchical structure of symmetric tensor cones.
		\item Understanding SOS$^*$ tensors aids in developing duality-based algorithms for polynomial optimization.
	\end{itemize}
	
	Further research is needed to fully characterize the boundary and interior of the SOS$^*$ cone, and to explore its applications in robust optimization and stability analysis.
	
	\section{Final Remarks}
	
	In this paper, we have systematically introduced and studied several new classes of structured symmetric tensors-completely decomposable (CD) tensors, strictly sum-of-squares (SSOS) tensors, and SOS$^*$ tensors—and established their fundamental properties and relationships.
	
	Key contributions include:
	
	\begin{itemize}
		\item \textbf{New Tensor Classes}: We defined CD, SSOS, and SOS$^*$ tensors, and positioned them within the broader landscape of symmetric tensor cones (PSD, SOS, CP, COP).
		\item \textbf{Dual Cone Relations}: We clarified the dual cone relationships, notably that $PSD_{m,n}^* = CD_{m,n}$ and $SOS_{m,n}^* = SOS_{m,n}^*$, and identified the interiors of these cones for even-order tensors.
		\item \textbf{Generalized Schur Product Theorems}: We extended the classical Schur product theorem to CD and CP tensors, including their strongly decomposable variants.
		\item \textbf{Equivalence and Characterization}: We established the equivalence between strongly completely decomposable (SCD) and positive definite (PD) tensors, and connected complete Hankel tensors to the CD framework.
		\item We also answered an open question raised in Subsection 5.7.10 of \cite{QL17}, by giving a PSD but not SOS Hankel tensor.
	\end{itemize}
	
	These results not only enrich the theory of structured tensors but also open new avenues for applications in polynomial optimization, signal processing, and data analysis, particularly where decomposition, stability, and duality play a central role.
	
	\textbf{Future work} may include:
	
	\begin{itemize}
		\item Generalizing the SCD--PD equivalence to other tensor classes,
		\item Studying the boundary structure of the SOS$^*$ cone,
		\item Developing efficient algorithms for checking CD or SSOS membership,
		\item Exploring applications of CD and SOS$^*$ tensors in robust and moment-based optimization.
		\item Identifying the conditions for a PSD Hankel tensor to be SOS,
		\item Investigating the relations of several SOS tensor subclasses, such as even-order symmetric weakly diagonally dominated tensors, even-order M-tensors and their absolute tensors, even-order symmetric B$_0$-tensors, even-order H-tensors with nonnegative diagonal entries, with CD tensors.
	\end{itemize}
	
	\bigskip	
	
	{{\bf Acknowledgment}}   We are thankful to Yannan Chen who verified the example in Subsection 5.5.
	This work was partially supported by Research  Center for Intelligent Operations Research, The Hong Kong Polytechnic University (4-ZZT8),    the National Natural Science Foundation of China (Nos. 12471282 and 12131004), the R\&D project of Pazhou Lab (Huangpu) (Grant no. 2023K0603),  the Fundamental Research Funds for the Central Universities (Grant No. YWF-22-T-204), Jiangsu Provincial Scientific Research Center of Applied Mathematics (Grant No. BK20233002).

	{{\bf Data availability}
		No datasets were generated or analysed during the current study.

		{\bf Conflict of interest} The authors declare no conflict of interest.}

	


\end{document}